\theoremstyle{plain}
\newtheorem{theorem}{Theorem}
\newtheorem{proposition}{Proposition}
\newtheorem{corollary}{Corollary}
\newtheorem{conjecture}{Conjecture}
\theoremstyle{definition}
\newtheorem{remark}{Remark}
\def\cP{\mathcal{P}}
\def\D{{\cal D}}
\def\ZZ{\mathbb{Z}}
\def\NN{\mathbb{N}}
\DeclareMathOperator{\supp}{supp}
\title{More about Exact Slow $k$-{\sc Nim}}
\author{
Nikolay Chikin\thanks{
National Research University Higher School of Economics (HSE), Moscow, Russia; e-mail:
nikolajchikin@yandex.ru}
\and
Vladimir Gurvich\thanks{
National Research University Higher School of Economics (HSE), Moscow, Russia; e-mail:
vgurvich@hse.ru and vladimir.gurvich@gmail.com}
\and
Konstantin Knop\thanks{HIL company (Horis International Limited);
e-mail:
kostyaknop@gmail.com}
\and
Mike Paterson\thanks{
Department of Computer Science, University of Warwick, UK; e-mail:
M.S.Paterson@warwick.ac.uk}
\and
Michael Vyalyi\thanks
{National Research University Higher School of Economics, Moscow, Russia;
  Institute of Physics and Technology, Dolgorpudnyi, Russia; Dorodnicyn Computing Centre,
  FRC CSC RAS, Moscow, Russia;
 e-mail:
vyalyi@gmail.com}
}
\begin{document}

\maketitle

\centerline{\bf Abstract}
\noindent
Given  $n$  piles of tokens and a positive integer $k \leq n$,
the game  {\sc Nim}$^1_{n, =k}$  of exact slow $k$-{\sc Nim}  is played as follows.
Two players move alternately.
In each move, a player chooses exactly $k$ non-empty piles
and removes one token from each of them.
A player whose turn it is to move but has no move loses
(if the normal version of the game is played, and wins if it is the mis\'{e}re version).
In Integers 20 (2020) 1--19, Gurvich et al gave an explicit formula
for the Sprague-Grundy function of  {\sc Nim}$^1_{4, =2}$,
for both its normal and mis\'{e}re version.
Here we extend this result and
obtain an explicit formula for the P-positions of the normal version
of {\sc Nim}$^1_{5, =2}$  and  {\sc Nim}$^1_{6, =2}$.

\smallskip

{\bf Key words:}
Exact {\sc Nim}, normal and mis\'{e}re versions,
P-positions,\\ Sprague-Grundy function.
\newline
MSC classes: 91A46, 91A05

\section{Introduction and main results}
\label{s0}

Games {\sc Nim}$^1_{n, =k}$  and  {\sc Nim}$^1_{n, \leq k}$
of Exact and Moore's Slow $k$-{\sc Nim}  were introduced in 2015 \cite{GH15}.
The present paper extends some results obtained in  \cite{GHHC20}
for  {\sc Nim}$^1_{4, =2}$  to {\sc Nim}$^1_{5, =2}$   and  {\sc Nim}$^1_{6, =2}$.
All basic definitions (impartial games in normal and  mis\'{e}re versions,
positions, moves, P- and N-positions, Sprague-Grundy  function)
can be found in the introduction of  \cite{GHHC20}; so we will not repeat them.
Here we need only
P-positions of the normal version of {\sc Nim}$^1_{n, =k}$.

\medskip

Positions of  {\sc Nim}$^1_{n, =k}$  are represented by
nonnegative integer $n$-vectors $x = (x_1, \ldots, x_n)$.
An $n$-vector  $x$  is called {\em nondecreasing} if
$x_1 \leq  \ldots  \leq x_n$.
We will always  assume that
positions of  {\sc Nim}$^1_{n, =k}$  are represented
by nondecreasing vectors,
yet this assumption may hold for $x$  but fail for $x'$
after a move  $x \to x'$.
In this  case we reorder coordinates of  $x'$
to maintain the assumption; see more details in \cite{GHHC20}.

\medskip

Note that
{\sc Nim}$^1_{n', =k}$  is a subgame of
{\sc Nim}$^1_{n, =k}$  whenever  $n' \leq  n$.
Indeed, the set of  $n$-vectors  $x =   (x_1, \ldots, x_n)$
(which are positions of  {\sc Nim}$^1_{n, =k}$)
whose first  $n - n'$  coordinates are zeros
is in an obvious  one-to-one correspondence
with the set of  $n'$-vectors
(which are the positions of  {\sc Nim}$^1_{n', =k}$).
Thus, a formula for the P-positions or the Sprague-Grundy function
of {\sc Nim}$^1_{n, =k}$  works for {\sc Nim}$^1_{n', =k}$  as well.

\medskip

By definition, any move  $x \to x'$  in  {\sc Nim}$^1_{n, =k}$
reduces exactly  $k$  coordinates of  $x$  by exactly one  each;
hence,  $\sum_{i=1}^n x_i  = \sum_{i=1}^n x'_i  \mod k$.
Thus {\sc Nim}$^1_{n, =k}$  is split into
$k$  disjoint subgames  {\sc Nim}$^1_{n, =k}[j]$
for  $j = 0, \ldots, k-1$  such that  $\sum_{i=1}^n x_i = j \mod k$.

\bigskip

Explicit formulas were obtained in \cite{GHHC20}
for the Sprague-Grundy function of  both the normal and mis\'{e}re
versions of {\sc Nim}$^1_{4, =2}$.
Here we extend this result and
give explicit formulas for the P-positions of the normal versions
of {\sc Nim}$^1_{5, =2}$ and {\sc Nim}$^1_{6, =2}$.
We will assume that
{\sc Nim}$^1_{n, =k}$   and  {\sc Nim}$^1_{n, =k} [j]$
refer to the normal version of the game
unless it is explicitly  said otherwise.

\medskip

Given   $x = (x_1, \ldots, x_n)$,  its
{\em parity vector} $p(x) = (p(x_1, \ldots, p(x_n))$
is defined as follows: $p(x)$  has  $n$ coordinates
taking values  $p(x_i) = e$  if  $x_i$ is even and
$p(x_i) = o$  if  $x_i$ is odd,  for $i = 1,\ldots,n$.

\begin{theorem}
\label{t6}
The P-positions of   {\sc Nim}$^1_{6, =2}[0]$
are characterized by the parity vectors:
\begin{equation}
\label{eq-t6}
(e, e, o, o, o, o), (o, o, e, e, o, o), (o, o, o, o, e, e), (e, e, e, e, e, e).
\end{equation}
\end{theorem}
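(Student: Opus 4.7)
Denote by $\P$ the set of nondecreasing nonnegative integer $6$-tuples whose parity vector appears in~\eqref{eq-t6}. The plan is a standard three-step verification: (T) the terminal position $(0, \ldots, 0)$ lies in $\P$; (A) every legal move from a position in $\P$ leaves $\P$; (B) every nondecreasing vector of {\sc Nim}$^1_{6, =2}[0]$ outside $\P$ admits a move into $\P$. Step (T) is immediate since $(0,\ldots,0)$ has parity $(e,e,e,e,e,e)$.

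For (A), each move flips exactly two parity bits, so from the all-even P-type it always produces a two-odd vector, which is not in $\P$. Starting from a four-odd P-type, a move flipping two odd bits yields a two-odd vector and a move flipping two even bits yields the six-odd vector $(o, \ldots, o)$, both outside $\P$. The only nontrivial case is a move flipping one even and one odd bit, which again produces a four-odd vector. For each of the three four-odd P-types and each choice of one even index $i$ and one odd index $j$ to decrement, I would enumerate the sorted parity vector of the resulting $6$-tuple; the critical fact making this finite check tractable is that whenever $i' \le j'$ and $p(x_{i'}) \ne p(x_{j'})$ one has $x_{i'} < x_{j'}$ strictly, which pins down where the new even coordinate lands after re-sorting. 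In every instance the sorted positions of the two evens differ from $\{1,2\}$, $\{3,4\}$, and $\{5,6\}$.

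For (B), I classify non-$\P$ positions by the number of odd coordinates, which must be even in the subgame $[0]$. If this number is six, decrement $x_1$ and $x_2$ to obtain parity $(e,e,o,o,o,o)$. If it is two, decrement the two odd piles (both positive) to reach $(e,e,e,e,e,e)$. If it is four with the evens in a non-P pair of positions, I exhibit a decrement of one even pile and one odd pile whose outcome, after re-sorting, places its even pair at one of $\{1,2\}$, $\{3,4\}$, $\{5,6\}$; for instance, for the even-pair pattern $\{1,4\}$, decrementing $x_2$ and $x_4$ produces $(e,e,o,o,o,o)$ after sorting, and analogous explicit moves handle each of the remaining eleven non-P four-odd patterns. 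Legality of every prescribed move is automatic: any odd coordinate is $\ge 1$, and each even coordinate we need to decrement dominates some earlier odd coordinate and so is $\ge 2$.

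The main obstacle is the bookkeeping in part~(B), together with the sub-case of~(A) that stays within four-odd vectors: in both, a decrement may change the parity of a coordinate equal to a same-parity neighbor, and re-sorting can then shuffle entries in non-obvious ways. My remedy is to list the twelve non-P four-odd patterns $\{i_1, i_2\}$ of even positions and, for each, prescribe a specific decrement pair; verifying the sorted parity then reduces to at most two or three sub-cases depending on whether adjacent same-parity coordinates are equal or differ by at least two. Once this finite enumeration is completed, (A) and (B) are both established and the characterization follows.
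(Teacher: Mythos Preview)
Your plan is sound and would yield a correct proof once the enumerations are carried out, but it takes a considerably more laborious route than the paper's argument. The overall two-part skeleton (no move stays in $\cP$; every position outside $\cP$ has a move into $\cP$) and the split by the number of odd coordinates in part~(B) match the paper exactly.

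Where you diverge is in the execution of the ``hard'' sub-cases. For part~(A), the paper dispatches \emph{all} moves at once with a single coding-theoretic observation: the four parity patterns in~\eqref{eq-t6} are pairwise at Hamming distance exactly~$4$, while any move changes the sorted parity vector in exactly two positions, so no move can connect two $\cP$-patterns. This replaces your entire $3\times 2\times 4$ enumeration (with its re-sorting sub-cases) by a one-line argument. For the four-odd case of part~(B), rather than listing twelve patterns with a tailored move for each, the paper prescribes a single uniform rule: pair the indices as $(1,2),(3,4),(5,6)$, then decrement the smaller even pile together with the odd pile paired to the larger even pile.

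What your approach buys is robustness: once you actually write out the twelve moves and check each, nothing is left implicit, and your legality criterion (an even coordinate to be decremented always dominates an earlier odd one, hence is at least~$2$) is clean. What the paper's approach buys is brevity and a structural explanation --- the set $\cP$ is a coset of a linear code of minimum distance~$4$ --- that your case-by-case analysis obscures. If you want to tighten your write-up, I would recommend replacing your (A) enumeration with the Hamming-distance argument; your (B) plan can stay as is, or you can adopt the pairing rule.
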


\begin{remark}
\label{r1}
Somewhat surprisingly, the same characterization holds
for the P-positions of Moore's Slow
{\sc Nim}$^1_{6, \leq 3}$; see part (5) of Theorem 2 in~\cite{GHHC20}.
\end{remark}

The set of P-positions of {\sc Nim}$^1_{6, =2}[0]$ can be defined by a
system of linear equations modulo 2 and the nondecreasing condition.
More specifically, the system of equations is
\begin{equation}\label{E*condition}
  \begin{aligned}
    x_2-x_1\equiv 0& \pmod2,\\
    x_4-x_3\equiv 0& \pmod2,\\
    x_5-x_4-x_1\equiv0& \pmod2.
  \end{aligned}
\end{equation}
Note that Eq.~\eqref{E*condition} implies $x_6-x_5\equiv0\pmod2$ since
\[
x_6-x_5+(x_4-x_3) +(x_2-x_1)\equiv x_6+x_5+x_4+x_3+x_2+x_1\equiv 0\pmod 2
\]
(the total number of tokens is even). It is easy to see that
Eq.~\eqref{eq-t6} provides all four solutions of
Eq.~\eqref{E*condition} such that  $x_6\equiv x_5\pmod2$.

The following concept will play an important role.
Given a  {\sc Nim}$^1_{n, =k}$  or {\sc Nim}$^1_{n, =k}[j]$,
a nonnegative nondecreasing  $n$-vector  $y$   is called a {\em P-shift}
if for any nondecreasing $n$-vector  $x$  we have:
either both  $x$  and  $x+y$  are  P-positions of the considered game,
or both are not.
Obviously, if  $y'$  and $y''$  are P-shifts then
$y = y' + y''$  is a P-shift too.

By Theorem \ref{t6}, the
nonnegative nondecreasing  $6$-vectors with even coordinates
and also vectors $(0,0,1,1,1,1)$ and $(1,1,1,1,2,2)$
are  P-shifts  in {\sc Nim}$^1_{6,
  =2}[0]$. (Eq.~\eqref{E*condition} helps to check this claim.)

As for {\sc Nim}$^1_{6, =2}[1]$, the set of P-positions has a more
complicated structure. To describe it, we introduce several
conditions on a nondecreasing 6-vector $(x_1,x_2,x_3,x_4,x_5,x_6)$.

Define  $E(x)$ to be true if Eq.~\eqref{E*condition} is satisfied by $x$;
$F(x)$ to be true if $x_4-x_3$ is even;
and $K(x)$ to be true if the total number of tokens $N(x)$ has residue 1 modulo 4,
i.e.
\[
N(x) = x_1 + x_2 + x_3 + x_4 + x_5 +x_6\equiv 1\pmod 4.
\]
We define the functions
\[
\begin{aligned}
 r(x)&=x_1+x_4-x_5, \\ s(x)  &= x_1 -x_2+x_3-x_4-x_5+x_6 , \\
u(x)&=s(x)-2r(x)+1.
\end{aligned}
\]
Note that since $N(x)$ is odd, $s(x)$ is also odd, and $u(x)$ is even.
Using these functions we define $T(x)=\min(s(x),u(x))$ and identify
three regions as follows:
\[
\begin{aligned}
  A = \{x: T(x)>0\},\quad
  B = \{x: T(x)=0\},\quad
  C = \{x: T(x)<0\}.
\end{aligned}
\]

\begin{theorem}
\label{t6odd}
  The P-positions of   {\sc Nim}$^1_{6, =2}[1]$
coincide with the set
\[
\cP =\{x: \big(E(x)\land (x\in A)\big) \lor \big(K(x)\land F(x)\land (x\in B)\big)
\lor\big( K(x)\land (x\in C)\big)\}.
\]
\end{theorem}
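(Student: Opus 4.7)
The plan is to follow the standard two-part verification for P-positions in a normal-play acyclic impartial game: (a)~$\cP$ contains every terminal position of {\sc Nim}$^1_{6,=2}[1]$, (b)~every move from a position in $\cP$ leads outside $\cP$, and (c)~from every non-terminal position outside $\cP$ there is a move into $\cP$.

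Part (a) is immediate. A position has no legal move iff at most one pile is non-empty, and $N(x)$ odd forces $x=(0,0,0,0,0,m)$ with $m$ odd. For such $x$, $E(x)$ holds, $r(x)=0$, $s(x)=m$ and $u(x)=m+1$, so $T(x)=m>0$ and $x\in A$; hence $x$ satisfies the first disjunct of $\cP$.

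For (b), the key observation is that a legal move decrements two coordinates by one before any reordering; thus the parities of exactly two of the six coordinates flip, $N(x)$ decreases by $2$ (so $K(x)$ is negated), and the linear forms $r$, $s$, $u$ change by amounts of bounded magnitude determined by which pair of sorted positions loses tokens. I would enumerate the $\binom{6}{2}=15$ pairs, noting that the subsequent resort may permute positions but does not change the multiset of parities, and tabulate the change in each of $E$, $F$, $r$, $s$, $u$, and $T$. Starting from $x$ with $E(x)$ true and $x\in A$, the three parity equations of $E$ forbid most pairs, and the remaining pairs drive $T$ by a controlled bounded amount; combined with the flip of $K$ this rules out each of the three disjuncts of $\cP$ at the new position. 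Analogous but shorter analyses cover starts in $B$ with $K$ and $F$ true, and starts in $C$ with $K$ true.

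Part (c) is the technical heart of the argument and I expect it to be the main obstacle. Given $x\notin\cP$, I must exhibit a move to some $y\in\cP$. I would split by the residue of $N(x)\bmod 4$ (which determines whether the target disjunct involves $K$ or $E\land A$) and by the current sign of $T(x)$, then in each subcase design a recipe for choosing the pair of indices to decrement so that the resulting parity pattern matches one of the four admissible patterns for the target disjunct and $T$ moves into the appropriate region $A$, $B$, or $C$. To keep the analysis finite I would invoke the P-shift concept introduced before the theorem: identify a generating set of P-shifts for {\sc Nim}$^1_{6,=2}[1]$ and use it to reduce any $x$ to a small-coordinate representative where the required move can be exhibited by a bounded inspection. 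A subtle point throughout will be to verify that the chosen move is legal, namely that both chosen piles are non-empty and that, after sorting, the parity pattern and the quantities $r$, $s$, $u$ satisfy exactly the conditions of the target disjunct; this legality check, together with the combinatorial explosion of subcases near the boundaries $T(x)\in\{-2,-1,0,1,2\}$ where resorting can change which index plays which role in $r$, $s$, $u$, is where the main work of the proof lies.
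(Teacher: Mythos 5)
Your setup is the right one and matches the paper's: verify (I) no move stays inside $\cP$ and (II) every position outside $\cP$ has a move into $\cP$, after tabulating the effect of all $\binom{6}{2}=15$ moves on the relevant linear forms (the paper does this in differential coordinates $q_i=x_i-x_{i-1}$, recording $\Delta r,\Delta s,\Delta u$ for each move). Your part (a) is fine, and your part (b) is a plausible plan, though as stated it is only a gesture: the actual argument is not that ``the parity equations forbid most pairs,'' but that $K$ alternates under every move ($N$ drops by $2$) and $E$ cannot hold both before and after a move (Hamming-distance argument), so a hypothetical $\cP\to\cP$ move must pass between the $E\land A$ disjunct and one of the $K$-disjuncts; the contradiction then comes from the bounds $\Delta s\le 2$, $\Delta u\le 4$ forcing $u$ or $s$ to land exactly on $2$ or $1$, combined with mod-$4$ identities relating $N$, $s$, $u$, $q_2$ and $r$. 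None of that arithmetic is in your sketch, and it is needed precisely because infinitely many positions sit near the boundary $T\in\{0,\pm 1,\pm 2\}$.

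The genuine gap is part (c), which you yourself flag as the heart. Your proposed mechanism --- pick a generating set of P-shifts for {\sc Nim}$^1_{6,=2}[1]$ and reduce every position to a small representative checkable by bounded inspection --- does not work. First, P-shifts are defined relative to the true set of P-positions, which is exactly what is being determined, so before the theorem is proved you may only use shifts shown algebraically to preserve the candidate set $\cP$ (as in Proposition~\ref{c1}). Second, and decisively, any such shift must essentially preserve the linear forms $r$ and $s$ (otherwise translating positions across the boundaries $T=0$ destroys invariance), so the available shift lattice has rank at most $4$ inside a $6$-dimensional cone: the quotient is an unbounded two-parameter family, parametrized in effect by the very quantities $r,s$ that define the regions $A$, $B$, $C$. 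Hence no finite list of small representatives exists, and one cannot avoid a parametric case analysis. The paper carries this out by splitting on the region of $q$, on $K(q)$, and on the parities of $q_2$, $q_4$, $r(q)$, exhibiting an explicit winning move in each subcase and treating separately the degenerate legality cases $q_1=0$ or $q_5=0$ (where the intended move is illegal and a substitute move must be shown to stay in the correct region). Without such an explicit construction your (II) remains unproven, so the proposal as it stands is an outline with its central step missing.
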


To get a description of P-positions for {\sc Nim}$^1_{5, =2}$ one
should set $x_1=0$ in Theorems~\ref{t6} and~\ref{t6odd} and shift
indices by 1. This gives two corollaries.

\begin{corollary}
\label{t5even}
The P-positions of   {\sc Nim}$^1_{5, =2}[0]$
are characterized by the parity vectors:
\begin{equation}
\label{eq-t5}
(e, o, o, o, o),  (e, e, e, e, e).
\end{equation}
\end{corollary}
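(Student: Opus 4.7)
The proof will be a direct application of Theorem~\ref{t6} via the subgame embedding recorded in Section~\ref{s0}. Recall from that discussion that {\sc Nim}$^1_{5,=2}$ is identified with the subset of positions of {\sc Nim}$^1_{6,=2}$ whose smallest coordinate equals $0$: a nondecreasing $5$-vector $(y_1,\ldots,y_5)$ corresponds to the $6$-vector $(0,y_1,\ldots,y_5)$. Under this correspondence the legal moves agree, since the leading zero of the $6$-vector cannot be touched (an empty pile cannot lose a token), and any move on the other five coordinates leaves the $0$ in the first position after the canonical reordering. Because prepending a $0$ does not change the total number of tokens modulo $2$, this embedding identifies {\sc Nim}$^1_{5,=2}[0]$ with the slice of {\sc Nim}$^1_{6,=2}[0]$ given by $x_1=0$, and hence the P-positions are in bijection.

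Given this, the proof reduces to reading off Theorem~\ref{t6} under the constraint $x_1=0$. A $6$-vector $(0,y_1,\ldots,y_5)$ is a P-position of {\sc Nim}$^1_{6,=2}[0]$ if and only if its parity vector is one of the four listed in Eq.~\eqref{eq-t6}. The two parity vectors beginning with $o$, namely $(o,o,e,e,o,o)$ and $(o,o,o,o,e,e)$, are ruled out because $0$ is even. The remaining two are $(e,e,o,o,o,o)$ and $(e,e,e,e,e,e)$; deleting the leading $e$ (the parity of the prepended $0$) yields exactly the two parity vectors $(e,o,o,o,o)$ and $(e,e,e,e,e)$ of Eq.~\eqref{eq-t5}.

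Since the entire derivation is combinatorial bookkeeping on top of Theorem~\ref{t6}, there is no substantial obstacle. The only point requiring a line of verification is that the subgame correspondence respects the nondecreasing convention: a move in {\sc Nim}$^1_{6,=2}$ starting from a vector with $x_1=0$ must select two strictly positive coordinates, so $x_1$ remains $0$ and, after the reordering of the other five coordinates, it stays in the first position. This justifies the one-to-one matching of moves on which the whole argument relies.
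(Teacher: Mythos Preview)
Your proof is correct and follows the same approach as the paper: restrict Theorem~\ref{t6} to the slice $x_1=0$, discard the two parity vectors in~\eqref{eq-t6} that begin with~$o$, and drop the leading~$e$ from the remaining two. You are simply more explicit than the paper about the subgame embedding already established in Section~\ref{s0}.
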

\begin{proof}
  The two other parity vectors correspond to odd values of $x_1$ in
  Eq.~\eqref{eq-t6}.
\end{proof}

If $y=(y_1,y_2,y_3,y_4,y_5)$ and $x=(0,y_1,y_2,y_3,y_4,y_5)$, then define
\begin{align*}
 r'(y)&=r(x)=y_3-y_4, \\
 s'(y)&=s(x)= -y_1+y_2-y_3-y_4+y_5 , \\
u'(y)&=u(x)=s'(y)-2r'(y)+1, \\
E'(y)&=E(x)=(y_1\equiv y_3-y_2\equiv y_4-y_3 \equiv 0\pmod2, \\
K'(y)&=K(x)=y_1 + y_2 + y_3 + y_4 + y_5 \equiv 1\pmod 4.
\end{align*}
Since the components of $x$ and $y$ are in sorted order,
$r'(y)\leq 0$ and $u'(y)>s'(y)$, and so $T'(y)=\min(s'(y),u'(y))=s'(y)$.

\begin{corollary}
\label{t5}
The P-positions of   {\sc Nim}$^1_{5, =2}[1]$ are characterized by the formula
\[
(T'<0)K' \lor (T'>0)E'.
\]
\end{corollary}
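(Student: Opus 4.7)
The plan is to derive Corollary~\ref{t5} as a direct specialization of Theorem~\ref{t6odd}. By the subgame embedding of {\sc Nim}$^1_{5,=2}$ into {\sc Nim}$^1_{6,=2}$ recorded in the introduction, the 5-vector $y=(y_1,\ldots,y_5)$ is a P-position of {\sc Nim}$^1_{5,=2}[1]$ if and only if $x=(0,y_1,\ldots,y_5)$ is a P-position of {\sc Nim}$^1_{6,=2}[1]$. Substituting $x_1=0$ into each of $r,s,u,E,K$ yields the primed versions $r'(y),s'(y),u'(y),E'(y),K'(y)$, as already recorded in the discussion immediately preceding the corollary.

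Two observations now collapse the three-case formula of Theorem~\ref{t6odd} into the two-case formula of the corollary. First, $y$ is nondecreasing, so $r'(y)=y_3-y_4\leq 0$; hence $u'(y)=s'(y)-2r'(y)+1>s'(y)$, which gives $T'(y)=\min(s'(y),u'(y))=s'(y)$. Second, because we are in the residue class $[1]$, the total $N(y)=y_1+\cdots+y_5$ is odd; writing $s'(y)=N(y)-2(y_1+y_3+y_4)$ shows that $s'(y)$ is odd, so in particular $s'(y)\neq 0$ and the region $B=\{T'=0\}$ contains no 5-vector of our parity class.

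The second observation makes the middle clause $K(x)\land F(x)\land(x\in B)$ of Theorem~\ref{t6odd} vacuous on this subset. What remains is
\[
\bigl(E'(y)\land T'(y)>0\bigr)\lor\bigl(K'(y)\land T'(y)<0\bigr),
\]
which is exactly the formula $(T'<0)K'\lor(T'>0)E'$ asserted in Corollary~\ref{t5}. Since the argument is a mechanical substitution supplemented by two short observations, there is no real obstacle; the only point that warrants some care is the parity computation that eliminates region $B$, because without it one might expect a boundary clause involving the predicate $F'$ to survive in the final formula.
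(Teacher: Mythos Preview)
Your proof is correct and follows essentially the same approach as the paper: specialize Theorem~\ref{t6odd} via the embedding $x=(0,y_1,\ldots,y_5)$, use $r'\le 0$ to get $T'=s'$, and use the parity of $s'$ to eliminate the $B$-region clause. The paper's proof is a terser version of exactly these steps.
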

\begin{proof}
The conditions of this corollary are obtained from the conditions of
Theorem~\ref{t6odd} by setting $x=(0,y_1,y_2,y_3,y_4,y_5)$ as in the definitions above.
Since $T'(y)=s'(y)\equiv y_1+y_2+y_3+y_4+y_5\equiv 1\pmod2$, the case $T'(y)=0$,
corresponding to $x\in B$, is impossible. The correspondence $x\in A$ if $T'(y)>0$ and
$x\in C$ if $T'(y)<0$ gives the P-positions for  {\sc Nim}$^1_{5, =2}[1]$.
\end{proof}

There are fewer invariant shifts in  {\sc Nim}$^1_{6, =2}[1]$
than in  {\sc Nim}$^1_{6, =2}[0]$.

\begin{proposition}
\label{c1}
Shifts by vectors
\[
\begin{aligned}
&y^{(1)}=(0,0,1,1,1,1),&& y^{(2)} =(1,1,1,1,2,2), \\
&y^{(3)} = (0,0,0,2,2,4),&& y^{(4)} =(0,2,2,2,2,4)
\end{aligned}
\]
preserve the sets described in Theorems~\textup{\ref{t6}} and~\textup{\ref{t6odd}}.
\end{proposition}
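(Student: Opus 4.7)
The plan is to reduce the proposition to the invariance under translation by $y$ of the linear and congruence forms that define the P-sets in Theorems~\ref{t6} and~\ref{t6odd}. Each P-set is a Boolean combination of: the nondecreasing convention; the mod-$2$ system $E(x)$ (of which $F(x)$ is a consequence); the mod-$4$ condition $K(x)$; and the signs of the integer linear forms $s(x)$ and $u(x)=s(x)-2r(x)+1$ (through $T(x)$ and the regions $A,B,C$). Since all these are affine in $x$ with integer coefficients, a shift $y$ preserves the set whenever it leaves each constituent condition invariant.

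First I would isolate, once and for all, the sufficient conditions on $y$: (i)~$y_1\le y_2\le\cdots\le y_6$ (so $x+y$ is nondecreasing when $x$ is); (ii)~$y$ satisfies Eq.~\eqref{E*condition}, equivalently $E(y)$ holds, which already implies that $y_4-y_3$ is even and so $F$ is preserved; (iii)~$\sum_{i=1}^{6}y_i\equiv 0\pmod 4$, so $K$ is preserved; (iv)~$r(y)=0$ and $s(y)=0$, so that $r,s,u$, and hence $T$ and the trichotomy $A,B,C$, are all translation-invariant under $y$. Because conditions (ii)--(iv) are homogeneous linear/congruence conditions in $y$, the implication they yield between $x$ and $x+y$ is an ``if and only if'', exactly as required by the P-shift definition.

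Next I would verify (i)--(iv) for each of $y^{(1)},y^{(2)},y^{(3)},y^{(4)}$ by direct substitution. The coordinate sums come out to $4,8,8,12$, all divisible by $4$ (giving (iii)); Eq.~\eqref{E*condition} is read off from the coordinates in a single line for each $y^{(i)}$; and the identities $y_1+y_4-y_5=0$ and $y_1-y_2+y_3-y_4-y_5+y_6=0$ (condition (iv)) each reduce to a short arithmetic check. Monotonicity (i) is visible by inspection. This collection of checks fits naturally into a small table.

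I do not foresee any real obstacle: the content of the argument is entirely in the preceding reformulation. The only point requiring a sentence of justification is, in the odd case, that making $E$, $F$, $K$ and $T$ separately invariant suffices to preserve the full disjunction $(E\land A)\lor(K\land F\land B)\lor(K\land C)$ defining $\cP$; this is automatic since each disjunct is built from invariants we have already controlled. For the even case the invariance of Eq.~\eqref{E*condition} under the shift by $y$ is immediate from (ii).
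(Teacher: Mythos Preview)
Your proposal is correct and follows essentially the same route as the paper's proof: both check that each $y^{(i)}$ satisfies Eq.~\eqref{E*condition} (hence preserves $E$ and $F$), has coordinate sum divisible by~$4$ (hence preserves $K$), and annihilates the linear forms $r$ and $s$ (hence preserves $u$, $T$, and the regions $A,B,C$). Your write-up is slightly more explicit about the nondecreasing condition and the ``if and only if'' direction, but the content is identical.
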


Thus, Theorems~\ref{t6} and~\ref{t6odd} imply that these vectors are
 invariant shifts in  {\sc Nim}$^1_{6, =2}$.

\begin{proof}
As mentioned above, shifts by these vectors preserve the set
described in Theorem~\ref{t6}. It remains to prove that they preserve
the set described in Theorem~\ref{t6odd}.

A shift by a vector $y^{(i)}$, $1\leq i\leq4$, preserves the parities of
$x_2-x_1$, $x_4-x_3$, $x_5-x_4-x_1$. So the conditions $E$ and $F$ are
invariant
under the shifts.

A shift by $y^{(i)}$, $1\leq i\leq4$, changes the total number of
tokens by a multiple of 4. So the condition $K$ is also invariant
under the shifts.

Note that
\[
\begin{aligned}
y^{(i)}_1 + y^{(i)}_4 - y^{(i)}_5&=0,\\
y^{(i)}_1 -y^{(i)}_2 +y^{(i)}_3 -y^{(i)}_4 -y^{(i)}_5 +y^{(i)}_6&=0.
\end{aligned}
\]
for  $1\leq i\leq4$. Thus the shifts preserve the values of each of the functions
$r$, $s$, $u$ and $T$.
\end{proof}

\medskip

The rest of the paper is organized as  follows.
In Section \ref{s1} we study P-shifts
(as well as some ``stronger" concepts).
In Section~\ref{ss21}  we prove Theorem~\ref{t6}, and  in Section~\ref{ss22}.
we prove  Theorem~\ref{t5}.

\section{Invariant shifts}
\label{s1}
Note that vector  $(0,0,0,2,2)$  is a P-shift
in {\sc Nim}$^1_{5, =2}$ assuming Corollaries~\ref{t6}
and~\ref{t6odd}.
The shift by this vector preserves the set described by
Eq.~\eqref{eq-t5}. It changes the total number of tokens by a multiple
of 4 and it preserves the condition $E'$ and the function $T'$. Thus it preserves
the set described in Corollaries~\ref{t6} and~\ref{t6odd}.

There are more examples:
$(1,1,1,1)$  and  $(1,1,1,1,1,1)$  are  P-shifts for
{\sc Nim}$^1_{4, =2}$  and  {\sc Nim}$^1_{6, =3}$, respectively;
$(0,1,1,1,1)$  and  $(0,1,1,1,1,1,1)$  are  P-shifts for
{\sc Nim}$^1_{5, =2}$  and  {\sc Nim}$^1_{7, =3}$, respectively;
$(0,2,2,2)$  and  $(0,2,2,2,2)$  are  P-shifts for
{\sc Nim}$^1_{4, =3}$  and  {\sc Nim}$^1_{5, =4}$, respectively.

Our next statement, generalizing all six examples,
will require the following definitions and notation.

Given a game  {\sc Nim}$^1_{n, =k}$  or {\sc Nim}$^1_{n, =k}[j]$,
a nonnegative nondecreasing  $n$-vector  $y$   will be called a
{\em  $g$-shift, or  $g^-$-shift, or $g^\pm$-shift}
if adding  $y$  to  any  position  $x$  of the considered game
preserves the Sprague-Grundy (SG) value,
mis\`{e}re SG value, or both values, respectively;
in other words, if
for any nonnegative nondecreasing $n$-vector $x$  we have
$g(x) = g(x+y)$, or $g^-(x)= g^-(x + y)$, or both equalities hold,
respectively.

The reader may recall
the definitions of the normal and mis\`{e}re SG functions
from the introduction of \cite{GHHC20}.
As usual, by $a^\ell$ we denote a symbol
(in particular, a number)  $a$  repeated  $\ell$  times.

\begin{theorem}
\label{p1}
Vectors  $(1^{2k})$, $(0, 1^{2k})$, and $(0, 2^k))$   are
$g^{\pm}$-shifts for the exact $k$-{\sc Nim} games:
{\sc Nim}$^1_{2k, =k}$, {\sc Nim}$^1_{2k+1, =k}$, and  {\sc Nim}$^1_{k+1, =k}$,
respectively.
\end{theorem}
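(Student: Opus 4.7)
The plan is to prove $g(x) = g(x+y)$ and $g^-(x) = g^-(x+y)$ simultaneously by strong induction on the token count $\sum_i x_i$. Every move decreases this quantity by exactly $k$, so the induction is well-founded. Since $g^-$ satisfies the same $\mex$ recursion on non-terminal positions (differing from $g$ only in the values at terminal positions), the same inductive step handles both functions, with the finite collection of base cases verified separately by direct computation.

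The central case is the shift $y = \bone = (1^{2k})$ in {\sc Nim}$^1_{2k,=k}$. From $x + \bone$ every $k$-subset $S' \subseteq [2k]$ gives a legal move (all coordinates are positive after the shift), while moves from $x$ are indexed by $k$-subsets $S \subseteq N(x)$, where $N(x)$ denotes the set of positive coordinates of $x$. I split the options of $x + \bone$ into \emph{matching} ones with $S' \subseteq N(x)$ and \emph{extra} ones with $S' \not\subseteq N(x)$. For a matching option, $(x + \bone) - e_{S'} = (x - e_{S'}) + \bone$, so the induction hypothesis at $x - e_{S'}$ yields equality of its SG value with that of the corresponding option of $x$. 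For an extra option, write $(x + \bone) - e_{S'} = x + e_T$ with $T = [2k] \setminus S'$, so $|T| = k$. The key observation is that from $x + e_T$ the single move subtracting $e_T$ is legal (each coordinate in $T$ is positive in $x + e_T$) and lands exactly at $x$; hence $g(x)$ appears among the SG values of options of $x + e_T$, forcing $g(x + e_T) \neq g(x)$. Combining, the set of SG values of options of $x + \bone$ consists of the SG values of options of $x$ (from matching) together with additional values all distinct from $g(x)$ (from extras), so its $\mex$ is $g(x)$.

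The shift $(0, 1^{2k})$ in {\sc Nim}$^1_{2k+1,=k}$ follows the same split-and-analyze pattern; the sorted order forces every coordinate of $x$ to be positive when $x_1 > 0$ (so no extras arise), and if $x_1 = 0$ the extras are $k$-subsets $S' \subseteq \{2, \ldots, 2k+1\}$ hitting a zero coordinate, where the identical ``$x$-is-an-option'' argument applies. The shift $(0, 2^k)$ in {\sc Nim}$^1_{k+1,=k}$ admits a direct move-by-move pairing: every move omits exactly one coordinate, and the identity $(x+y) - e_{[k+1] \setminus \{i\}} = (x - e_{[k+1] \setminus \{i\}}) + y$ together with $y = 2y'$ for $y' = (0, 1^k)$ lets the induction hypothesis supply the SG equality option-by-option. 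The boundary case $x_1 = x_2 = 0$, where $x$ is terminal but $x + y$ still has a move, is settled by noting that the unique option $x + y'$ of $x + y$ has $x$ as its unique option, so that $g(x + y) = \mex\{\mex\{g(x)\}\} = g(x)$ (and analogously for $g^-$).

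The main obstacle is to confirm that the extra options in the first two cases do not disturb the $\mex$; the resolution is the one-line observation that $x$ itself is directly reachable from any extra-option position $x + e_T$ by the single move $e_T$, forcing $g(x + e_T) \neq g(x)$. The misère claim $g^-(x) = g^-(x + y)$ proceeds by the same induction, since only the $\mex$ relation on options is used in the inductive step; the base cases are verified by direct computation at the (few) terminal positions in play.
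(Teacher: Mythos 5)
Your overall strategy is the same as the paper's: induct on a decreasing quantity, match each option of $x$ with the corresponding option of $x+y$, and dispose of the unmatched (``extra'') options of $x+y$ by observing that $x$ itself is reachable from them in one move (the paper's ``complementary''/``repeat'' move), so their Grundy value cannot equal $g(x)$. That key observation, and your $\mex$-set packaging of it, are correct for the normal-play half. However, your treatment of the base case breaks the mis\`ere half. The terminal positions are not ``a finite collection'': in {\sc Nim}$^1_{2k,=k}$ they are all vectors with at least $k+1$ zero coordinates, an infinite family, so ``direct computation'' is not available. For normal play this costs nothing (a terminal $x$ has empty option set, every option of $x+y$ is an extra with value $\neq 0$, and the $\mex$ is $0$). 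But for mis\`ere play you must show $g^-(x+y)=1$ when $x$ is terminal, and your mechanism only shows that no option of $x+y$ has mis\`ere value $1$; if $0$ were also missing the $\mex$ would be $0$, not $1$. You must additionally \emph{exhibit}, for every terminal $x$, an option $z''$ of $x+y$ all of whose options are terminal (so that $g^-(z'')=\mex\{1\}=0$). The paper states exactly this as the third item of its base-case checklist; you prove it only for {\sc Nim}$^1_{k+1,=k}$ (your $x_1=x_2=0$ analysis) and omit it for the other two games, where it requires choosing $z''$ by decrementing $k$ of the zero coordinates of $x$.

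A second, more minor issue is that your claimed identities $(x+y)-e_{S'}=(x-e_{S'})+y$ hold only before re-sorting. Since positions are always kept nondecreasing and $y=(0,1^{2k})$ and $(0,2^k)$ are not constant vectors, ties can break the pairing: e.g.\ in {\sc Nim}$^1_{3,=2}$ with $x=(2,2,3)$ and $y=(0,2,2)$, the move omitting pile $1$ sends $x+y=(2,4,5)$ to $(2,3,4)$, whereas the sorted option $(1,2,2)$ of $x$ shifted by $y$ is $(1,4,4)$. So extras \emph{do} arise in your third case (and in the second case even when $x_1>0$), contrary to your assertion of a clean move-by-move pairing. These misaligned options are still harmless --- the complementary (resp.\ repeated) move from them returns to $x$, so your own ``$x$ is an option'' argument applies --- but as written your pairing claim is false and the argument needs this extra clause. (The paper's write-up is also loose on this re-sorting point, but it at least invokes the back-move whenever the naive correspondence fails, which is the correct repair.)
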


Also, vector $(2^k)$  is a $g^{\pm}$-shift in {\sc Nim}$^1_{k, =k}$.
However, this game is trivial:
$g(x) = 1 - g^-(x) = x_1 \bmod 2$.

The {\em support}, $\supp(z)$,  of a nonnegative
$n$-vector $z$  is defined as the set of its positive coordinates.

Note also that in each of the three games considered in Theorem \ref{p1},
for the corresponding shift $y$  and any position  $x$  we  have:
\begin{itemize}
\item[(i)] the set-difference $\supp(x) \setminus \supp(y)$
contains at most one element;
\item[(ii)]  the sum of the coordinates of  $y$  equals  $2k$.  
\end{itemize}
Both observations will be essential in the proof.

\proof
We have to show that
$g(x) = g(x+y)$  and  $g^-(x) = g^-(x+y)$
for every position  $x$  in each of the three games considered.
We will prove all three claims simultaneously
by induction on the height  $h(x)$  
of a position  $x$.
Recall that  $h(x)$  is defined as the maximum number
of successive moves that can be made from  $x$.

To a move  $x \to x'$  we assign the move
from  $x + y \to  x''$  that reduces the same coordinates.
Then, $g(x') = g(x'')$  holds by the induction hypothesis,
implying that  $g(x+y) \geq g(x)$.
Assume for contradiction that  $g(x+y) > g(x)$.
Then there is a move  $x+y \to z'$  such that $g(z') = g(x)$.
If there  exists a  move  $x \to z''$
reducing the same coordinates then the previous arguments work
and by induction we obtain that  $g(z') = g(z'')$.
Hence, $g(z') \neq g(x)$, because otherwise
$g(x) = g(z'')$, while $x \to z''$  is a move,
which is a contradiction.

However, the required move  $x \to z''$  may fail to exist.
Obviously, in this case  $x_1 = 0$  must hold in all three cases.
For {\sc Nim}$^1_{2k+1, =k}$ and {\sc Nim}$^1_{k+1, =k}$
we also have  $y_1 = 0$  and hence,
move  $x+y \to z'$  cannot reduce  $x_1 + y_1 = 0$.
But then, in all three cases there exists a move  $z' \to x$.
Indeed, in {\sc Nim}$^1_{k+1, =k}$  we just repeat the previous move
reducing the same  piles.
In contrast, in {\sc Nim}$^1_{2k, =k}$  and  in {\sc Nim}$^1_{2k+1, =k}$
we make the ``complementary" move, reducing the $k$  piles
that were not touched by the previous move.
Note that in the second case we also do not touch the first pile,
because it is empty.
(Note also that both of the above conditions (i,ii)
are essential in all three cases.)

Finally, the existence of the move  $z' \to x$  implies that
$g(x) \neq g(z')$,
Thus, no move from  $x+y$  can reach the SG value  $g(x)$,
implying that equality  $g(x) = g(x+y)$  still holds.

The same arguments work in the mis\`{e}re case as well, since
the mis\`{e}re SG function  $g^-$  is defined by the same recursion as $g$,
differing from it only by the initialization.

\smallskip

It remains to verify the base of induction for both
$g$  and  $g^-$ and for all three games considered.
Note that all terminal positions
(from which there are no moves),
not only  $(0, \ldots, 0)$,  must be considered.
It is easily seen that the terminal positions in
{\sc Nim}$^1_{2k, =k}$, {\sc Nim}$^1_{2k+1, =k}$, and  {\sc Nim}$^1_{k+1, =k}$
are exactly the positions with at least $k+1, k+2,$ and $2$
zero coordinates, respectively.
For each game we have to verify that
$g(x+y) = g(x) = 0$   and
$g^-(x+y) = g^-(x) = 1$
for the corresponding shift $y$ and
any terminal position  $x$  of the game.
It is enough  to check that
\begin{itemize}
\item{} there is no move  $x+y \to x$;
\item{} for any move $x+y \to z'$  there exists
a move $z' \to x$  to a  terminal position  $x$;
\item{} there exists a move $x+y \to z''$  such that
each move from  $z''$  results in a terminal position.
\end{itemize}
We leave this tedious but simple case analysis to the careful reader.
\qed

\medskip

Note that Theorem \ref{p1} may simplify
the analysis of the three games considered.
Without loss of generality, we can restrict ourselves to the  positions $x$  such that

\begin{itemize}
\item{} $x_1 = 0$  in  {\sc Nim}$^1_{2k, =k}$;
in other words, we reduce
{\sc Nim}$^1_{2k, =k}$  to  {\sc Nim}$^1_{2k-1, =k}$;
(for  $k = 2$  see Theorem 4 
and Corollary 1  
in \cite{GHHC20});
\item{} $x_1 = x_2$  in  {\sc Nim}$^1_{2k+1, =k}$;
\item{} $x_2 \leq x_1 + 1$  for   {\sc Nim}$^1_{k+1, =k}$.
\end{itemize}

Let us mention a few  more observations related to P-shifts.

\smallskip

Vector $(0,0,0,2,2)$  is not a P-shift
in the mis\`{e}re version of  {\sc Nim}$^1_{5, =2}$.
For example, direct computations show that
$(3,3,3,4,8)$  is a P-position, while
$(3,3,3,6,10)$  is not.
We have no explicit formula for the P-positions
in this case.

\medskip

Vector $(0,0,0,0,4)$  is not a P-shift in  {\sc Nim}$^1_{5, =2}$.
For example,
$x' = (2,2,3,4,6)$  is a  P-position, while
$x'' = (2,2,3,4,10)$  is not, by Corollary~\ref{t5}.
Indeed,
$T'(x')<0$ ($-2+2-3-4+6<0$) and $K'(x')$ is true ($2+2+3+4+6 =
17\equiv1\pmod4$). On the other hand, $T(x'')>0$ is false
($-2+2-3-4+10>0$), and $E(x''_4-x''_3)$ is false.
Thus, in accordance with Corollary~\ref{t5},
$x'$ is a P-position, while  $x''$  is not.

\medskip

Finally, let us consider shift $(0,0,0,2,2,2)$  in {\sc Nim}$^1_{6, =3}$.
Our computations suggest the conjecture  that it is a P-shift in the subgames
{\sc Nim}$^1_{6, =3}[0]$  and  {\sc Nim}$^1_{6, =3}[1]$.
Yet, it is not a $g$-shift in these games, for example,

\smallskip

$3 = g(1,2,2,2,4,4) \neq g(1,2,2,4,6,6) = 5$;

$1 = g(1,2,3,3,3,4) \neq g(1,2,3,5,5,6) = 3$. \\
Also, it is not a P-shift in {\sc Nim}$^1_{6, =3}[2]$; for example,

$0 = g(0,7,7,7,7,10) \neq  g(0,7,7,9,9,12) = 3$. \\
Finally, it is not a $g^-$-shift,
and not even a  P-shift in the mis\'{e}re version of {\sc Nim}$^1_{6, =3}[j]$
for all  $j =0,1,2$.  For example,

\smallskip

$0 = g^-(1,2,3,3,3,3) \neq  g^-(1,2,3,5,5,5) = 1$;

$0 = g^-(1,2,3,3,3,4) \neq  g^-(1,2,3,5,5,6) = 3$;

$0 = g^-(0,1,2,2,2,4) \neq  g^-(0,1,2,4,4,6) = 3$.

\section{Proofs of Main Theorems} 
\label{ss2}

\subsection{Proof of Theorem \ref{t6}}
\label{ss21}
We have to prove that the set of P-positions
of {\sc Nim}$^1_{6, =2}[0]$  coincides with
the set $\cP$  satisfying (\ref{eq-t6}).
It is enough to show that
(I)  there is no move  $x \to x'$  such that
$x, x' \in \cP$  and
(II) for any  $x \not\in \cP$  there is a move
$x \to  x'$   such that  $x' \in \cP$.

\medskip

(I). By (\ref{eq-t6}), the Hamming distance between any two parity
vectors of distinct positions of $\cP$ is exactly $4$, while for any
move $x \to x'$ the Hamming distance between $p(x)$ and $p(x')$ is
exactly $2$, in accordance with the rules of the game.

\medskip

(II). Fix a position  $x = (x_1,\ldots,x_6) \not\in \cP$
in {\sc Nim}$^1_{6, =2}[0]$.
Since  $\sum_{i=1}^6 x_i$  is even, the number
$j$  of odd coordinates of  $x$  is even too and
we have to consider the following three cases.

\smallskip

$j = 6$, that is, all six piles are odd,
$p(x) = (o,o,o,o,o,o)$.
In this case we can reduce  $x_1$ and $x_2$,
getting a  move $x \to x'$  such that
$p(x') =  (e,e,o,o,o,o)$.

\smallskip

$j = 2$, that is, there are exactly two odd piles.
Reducing them we get a move  $x \to x'$  such  that
$p(x') =  (e,e,e,e,e,e)$.

\smallskip

$j = 4$, that is, there are four odd piles and two even.
Let us match coordinates  1 and 2, 3 and 4, 5 and 6.
Make the (unique) move  $x \to x'$  reducing in $x$
the smaller even pile and the odd one
that is matched with the larger even pile.
If the two even piles are of the same size,
$x_i = x_{i+1}$,  then we agree that
pile $i$  is smaller than pile  $i+1$.    

\smallskip

It is easy to  verify that in all the above cases
the chosen move  $x \to  x'$  is possible,
because it reduces (by one token) two non-empty piles;
furthermore, in all cases  $x' \in \cP$, by~(\ref{eq-t6}).
\qed

\subsection{Proof of Theorem \ref{t6odd}}
\label{ss22}

It is convenient to change coordinates for positions. We adopt
`differential coordinates':
\[
\begin{aligned}
&q_1= x_1,& &q_2 = x_2-x_1,& &q_3 = x_3 - x_2,\\
&q_4 = x_4-x_3,&  &q_5 = x_5 - x_4, & &q_6 = x_6-x_5.
\end{aligned}
\]
Recall that $x$ is assumed to be nondecreasing. Thus $q_i\geq0$ for all $i$.

We define a move, reducing two piles by one token each, to be \emph{legal} if the resulting
piles are still in nondecreasing order of size. For a move ${i,j}$, where $i<j$ this
condition is that $q_i>0$, and if $i<j-1$ then also $q_j>0$.

In these coordinates the  conditions used in Theorem~\ref{t5} are
expressed   as
follows:  $r(q)=q_1-q_5$; $s(q)= -q_2-q_4+q_6$;
$u(q)=-2q_1-q_2-q_4+2q_5+q_6+1$; and $T(q)=\min(s(q),u(q))$.
The three polyhedral  regions are as before:
\begin{equation}\label{62odd-regions}
\begin{aligned}
  &A = \{q: T(q)>0\},\\
  &B = \{q: T(q)=0\},\\
  &C = \{q: T(q)<0\}.
\end{aligned}
\end{equation}
The set described in the theorem is expressed in the differential
coordinates as
\begin{equation}\label{62odd-condition}
\cP =\{q: \big(E(q)\land (q\in A)\big) \lor \big(K(q)\land F(q)\land (q\in B)\big)
\lor\big( K(q)\land (q\in C)\big)\},
\end{equation}
where
\[
\begin{aligned}
E(q) &:= (q_2\equiv q_4\equiv q_1-q_5 \equiv 0 \pmod2),\\
F(q) &:= (q_4\equiv 0 \pmod 2) ,\\
K(q) &:= (N(q)\equiv -2q_1+q_2-q_4+2q_5+q_6\equiv 1\pmod4).
\end{aligned}
\]

We have to prove that the set of P-positions
of {\sc Nim}$^1_{6, =2}[1]$  coincides with
the set $\cP$.
The parity condition
\begin{equation}\label{parity-condition}
x_1+x_2+x_3+x_4+x_5+x_6 \equiv q_2+q_4+q_6\equiv 1\pmod2
\end{equation}
implies that $s(q)$ is odd and $u(q)$ is even.

We will use several simple observations on the
conditions~\eqref{62odd-condition} before and after a move.

\begin{proposition}\label{Kalternates}
Let $q\to q'$ be a move in  {\sc Nim}$^1_{6, =2}[1]$. Then
$K(q)\oplus K(q')=1$.
\end{proposition}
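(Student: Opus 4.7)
The plan is extremely short, since Proposition~\ref{Kalternates} reduces to a one-line observation about the parity of the token count modulo~4. The approach is to directly compute how $N(q)$ changes under a single move and combine this with the defining congruence of the subgame {\sc Nim}$^1_{6,=2}[1]$.

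First I would recall that by the rules of {\sc Nim}$^1_{6,=2}$ every move reduces exactly two piles by one token each, so the total number of tokens satisfies $N(q') = N(q) - 2$. Second, since we are in the subgame {\sc Nim}$^1_{6,=2}[1]$, the parity condition~\eqref{parity-condition} forces $N(q)$ (and hence also $N(q')$) to be odd, so both residues $N(q)\bmod 4$ and $N(q')\bmod 4$ lie in the set $\{1,3\}$. Third, combining these two facts, subtraction by $2$ swaps residues $1$ and $3$ modulo $4$, so exactly one of $N(q)$ and $N(q')$ is congruent to $1\pmod 4$. By the definition of $K$, this says $K(q)\oplus K(q')=1$.

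There is no real obstacle here; the only thing to check is that the claim is stated for moves in {\sc Nim}$^1_{6,=2}[1]$ (not in the whole game), which is what guarantees the oddness of $N$ and therefore prevents the alternative residues $0$ and $2$ from appearing. The proof can be written in two or three sentences.
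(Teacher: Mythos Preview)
Your proposal is correct and matches the paper's own proof essentially verbatim: the paper simply notes that each move decreases $N$ by $2$, so the odd residue of $N$ modulo $4$ alternates between $1$ and $3$. Your version is slightly more explicit about why $N$ is odd, but the argument is the same.
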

\begin{proof}
  The total number of tokens $N$ decreases by 2 after each move. So, the
  (odd) residue of $N$ modulo 4 alternates between 1 and 3.
\end{proof}

\begin{proposition}\label{parity-break}
  Let $q\to q'$ be a move in  {\sc Nim}$^1_{6, =2}[1]$. Then
$E(q)$ is true implies that  $ E(q')$ is false.
\end{proposition}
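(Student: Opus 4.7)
My plan is to show directly that no legal move $\{i,j\}$ can simultaneously preserve the three parity conditions comprising $E(q)$. Setting $p_i := x_i \bmod 2$, the three relations defining $E(q)$ translate into the linear system
\[
p_1+p_2\equiv 0,\qquad p_3+p_4\equiv 0,\qquad p_1+p_4+p_5\equiv 0\pmod 2.
\]
A legal move $\{i,j\}$ (with $i<j$, $q_i>0$, and $q_j>0$ unless $j=i+1$) reduces $x_i$ and $x_j$ by one each while preserving the nondecreasing order, so it flips precisely the two parities $p_i$ and $p_j$ and leaves the other four parities unchanged.

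Introduce the three ``support'' sets $S_1=\{1,2\}$, $S_2=\{3,4\}$, $S_3=\{1,4,5\}$ for these equations. The $\ell$-th equation is preserved by the move iff $|\{i,j\}\cap S_\ell|$ is even. The constraint for $\ell=1$ forces either $\{i,j\}=\{1,2\}$ or $\{i,j\}\subseteq\{3,4,5,6\}$; combining this with $\ell=2$ restricts $\{i,j\}$ to one of $\{1,2\}$, $\{3,4\}$, $\{5,6\}$. A direct check now shows that each of these three candidate pairs meets $S_3$ in exactly one element (namely $1$, $4$, and $5$ respectively), so the third equation is always flipped. Hence $E(q')$ must be false.

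The only minor point that needs addressing is the reduction to legal moves: a game move that would disturb the nondecreasing order coincides, after the forced reordering, with a legal move on a neighboring index pair in the sorted representation, so the finite case analysis above actually covers every move in the game. I do not expect any substantive obstacle; once the parity equations are written out, the argument is a short combinatorial calculation amounting to the observation that the parity system defining $E$ admits no weight-two solution in the indicator vectors of $\{i,j\}$.
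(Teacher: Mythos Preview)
Your argument is correct and essentially parallel to the paper's: both rest on the observation that a move flips exactly two entries of the parity vector $p(x)$ and that no such flip can preserve all three congruences defining $E$. The only presentational difference is that the paper, reusing case~(I) of the proof of Theorem~\ref{t6}, enumerates the four parity patterns $(a,a,b,b,c,\bar c)$ with $c=a\oplus b$ compatible with $E$ and the odd-sum constraint and observes that any two are at Hamming distance exactly~$4$, whereas you work directly with the parity-check supports $S_1,S_2,S_3$ and verify that no $2$-subset meets all three evenly; these are dual descriptions of the same linear-algebraic fact.
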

\begin{proof}
We repeat the argument from the proof of Theorem~\ref{t6}, case
(I). If the total number of tokens is odd and $E(q)$ is true, then the
parities of vector $x$ have the form $(a,a,b,b,c,\bar c)$ where $c=a\oplus b$. This implies
that the Hamming distance between any two parity
vectors of distinct positions of $\cP$ is exactly $4$, while for any
move $x \to x'$ the Hamming distance between $p(x)$ and $p(x')$ is
exactly $2$.
\end{proof}

In the case analysis below, Table~\ref{q-moves} is helpful. It
shows, for each move, the changes of $q$-coordinates, $s(q)$, and $u(q)$.

\begin{table}
\caption{Results of moves $q\to q'$ in `differential coordinates', $\Delta q_i = q'_i - q_i$,
$\Delta r = r(q')-r(q)$, $\Delta s = s(q')-s(q)$, $\Delta u = u(q')-u(q)$.}
\label{q-moves}
  \def\arraystretch{1.5}
\[
\begin{array}{|c|c|c|c|c|c|c||c||c||c||}
\hline
  \text{Moves}& \Delta q_1&\Delta q_2&\Delta q_3&
\Delta q_4&\Delta q_5&\Delta q_6&\Delta r&\Delta s&\Delta u \\
\hline
\{1,2\} & -1&0&+1&0&0&0&    -1&0&+2\\
\{1,3\} &-1&+1&-1&+1&0&0&  -1&-2&0\\
\{1,4\} & -1&+1&0&-1&+1&0& -2&0&+4\\
\{1,5\} &-1&+1&0&0&-1&+1&   0&0&0\\
\{1,6\} & -1&+1&0&0&0&-1&   -1&-2& 0\\
\{2,3\} &0 &-1&0&+1&0&0&     0&0&0\\
\{2,4\} &0&-1&+1&-1&+1&0& -1&+2&+4\\
\{2,5\} &0&-1&+1&0&-1&+1& +1&+2&0\\
\{2,6\} &0 &-1&+1&0&0&-1&   0&0&0\\
\{3,4\} &0 &0&-1&0&+1& 0&  -1&0&+2\\
\{3,5\}&0&0&-1&+1&-1&+1& +1&0&-2\\
\{3,6\} &0 &0&-1&+1&0&-1&   0&-2&-2\\
\{4,5\} &0 &0&0&-1&0& +1&   0&+2&+2\\
\{4,6\} &0 &0&0&-1&+1&-1& -1&0&+2\\
\{5,6\} &0 &0&0&0&-1&0&   +1&0&-2\\
\hline
\end{array}
\]
\end{table}

As in the proof of Theorem~\ref{t6}, it is enough to show that
(I)  there is no move  $q \to q'$  such that
$q, q' \in \cP$  and
(II) for any  $q \not\in \cP$  there is a move
$q \to  q'$   such that  $q' \in \cP$.

\medskip

We  prove (I) by contradiction. Suppose
that $q, q'\in \cP$ for a move $q\to q'$.
Propositions~\ref{Kalternates} and~\ref{parity-break} imply that
either (i) $K(q)\&\neg E(q)\&\neg K(q')\&E(q')$ or (ii) $\neg K(q)\&E(q)\& K(q')\&\neg E(q')$.

\medskip

\textsc{Case} (i): $K(q)\&\neg E(q)\&\neg K(q')\&E(q')$. \\
In this case, $T(q')>0$, $E(q')$, and either $(T(q)=0)\&F(q)$ or $T(q)< 0$.

If $(T(q)=0)\&F(q)$ then from $K(q)$ and $T(q)=u(q)=0$, we have
\begin{align*}
  &-2q_1+q_2-q_4+2q_5+ q_6   &\equiv&\ 1\pmod4, \mathrm{\ and}&\\
  &-2q_1-q_2-q_4+2q_5+ q_6+1 &=&\ 0,&
\end{align*}
which implies that $q_2$ is odd. Since $F(q)$ (i.e., $q_4$ is even) and $E(q')$,
any move taking $q$ to $q'$ must have $\Delta q_4$ even and $\Delta q_2$ odd.
From Table~\ref{q-moves} we see that there are just four moves with this property but
all of these have $\Delta u =0$, which contradicts the required increase in $T$.

Otherwise, $T(q)< 0$, so $u(q)\leq -2$ or $s(q)\leq -1$ but $u(q')\geq 2$ and $s(q')\geq 1$.
Since $\Delta u\leq 4$ and $\Delta s\leq 2$ for all the moves, either $u(q')=2$ or $s(q')=1$.
From $\neg K(q')\&E(q')$, we have
\begin{align*}
 N(q')\equiv -2q'_1+q'_2-q'_4+2q'_5+q'_6&\equiv3&\pmod4& \mathrm{\ and}\\
  q'_2\equiv q'_4\equiv q'_5-q'_1&\equiv 0& \pmod 2&.
\end{align*}
Note that
\begin{align*}
3\equiv N(q') &\equiv u(q')+2q'_2-1 &\pmod 4& \mathrm{\ and}\\
3\equiv N(q') &\equiv s(q')+2q'_2-2r(q')-2 &\pmod 4&,
\end{align*}
so $u(q')=2$ implies $q'_2$ odd, and $s(q')=1$ implies $q'_2-r(q')$ odd.
Either of these contradict $E(q')$ which has $q'_2$ and $r(q')$ both even.

\medskip

\textsc{Case} (ii): $\neg K(q)\&E(q)\& K(q')\&\neg E(q')$. \\
In this case, $T(q)>0$, $E(q)$, and either $(T(q')=0)\&F(q')$ or $T(q')< 0$.
The analysis is almost the
same as above, exchanging $q'$ for $q$, replacing ```increase'' by ``decrease'', and similar.

\bigskip

Now we prove (II). Thus, $q\notin\cP$ and we are going to indicate
a~move $q\to q'$ such that $q'\in\cP$.

\medskip

\textsc{Case $q\in C$.} This implies $\neg K(q)$.
By Proposition~\ref{Kalternates}, $q'\in\cP$ for any move $q\to q'$ preserving
region~$C$.

Table~\ref{q-moves} shows many moves that preserve region~$C$,
including$\{2,3\}$, $\{5,6\}$, and $\{1,6\}$. If $q_2>0$ or $q_5>0$ then
$\{2,3\}$ or $\{5,6\}$ is legal and we are done. Now we assume $q_2=q_5=0$.

If $q_6>0$ then $\{1,6\}$ is legal provided that $q_1>0$. If $q_1=0$ then
$u(q)= -q_4+q_6+1>s(q)= -q_4+q_6 = T(q) < 0$. Move $\{4,6\}$ is legal
since $q_4>q_6>0$ and gives $\Delta s=0$ from Table~\ref{q-moves}.
Therefore $s(q')=s(q)<0$ and $q'\in \cP$.

Otherwise, if $q_6=0$ then $s(q)=-q_4<0$ and $u(q)= -2q_1 -q_4 +1<0$.
If $q_1>0$ then $\{1,4\}$ is legal and gives $\Delta s=0$, thus preserving
region~$C$. However, if $q_1=0$ then $u(q)=-q_4+1<0$. Since $u$ is
always even, this implies $q_4\geq 3$. Move $\{4,5\}$ is legal and gives
$\Delta s=2$, so $s(q')= s(q)+2= -q_4+2<0$ and region~$C$ is preserved.

\medskip

\textsc{Case $q\in B$.}

\noindent In this case $T(q)=0$ and, since this is even,
$s(q)>u(q)=T(q)=0$. Therefore $r(q)>0$, i.e., $q_1>q_5\geq 0$,
and $q_6\geq s(q)>0$.

\medskip

\textsc{Subcase $q\in B$ and $\neg K(q)$.}

\noindent Proposition~\ref{Kalternates} proves $K(q')$.
Since $2q_2\equiv 1+N(q)\equiv 0 \pmod4$, $q_2$ is even.
We will show
that at least one of the moves $\{1,6\}$, $\{4,5\}$ or $\{4,6\}$ gives
$q'\in\cP$.

(i) If $q_4$ is even, we use $\{1,6\}$, which is legal since $q_1>0$ and $q_6>0$.
Table~\ref{q-moves} shows that $\Delta q_4=0$ and $\Delta T\leq 0$,
therefore $K(q')\&F(q')\&(q'\in B\cup C)$ and so $q'\in\cP$.

(ii) If $q_4$ is odd, then $q_4>0$ and both $\{4,5\}$ and $\{4,6\}$ are legal.
If $r(q)$ is even, then we see from Table~\ref{q-moves} that $\{4,5\}$ gives
$\Delta q_2=0$, $\Delta q_4= -1$, $\Delta r=0$, $\Delta T=2$,
so $E(q')\&(T(q')=2)$ and therefore $q'\in\cP$.
Otherwise, if $r(q)$ is odd then $\{4,6\}$ ensures in a similar way that
 $E(q')\&(T(q')>0)$ and therefore $q'\in\cP$.

\medskip

\textsc{Subcase $q\in B$ and $K(q)$.}

\noindent Since $2q_2\equiv 1+N(q)\equiv 2 \pmod4$, $q_2$ is odd,
and since $q\notin\cP$, $F$ is false, i.e., $q_4$ is odd.
Also $s(q)=-q_2-q_4+q_6$ is always odd, so $q_6$ is odd too.
We have $q_1>0$ and both of $q_2$ and $q_4$ odd,
so moves $\{1,4\}$ and $\{2,4\}$ are legal. Since $q\in B$, $s(q)>u(q)=0$ and
so Table~\ref{q-moves} shows that each of these moves
gives $T(q')=\min(s(q'),u(q'))>0$, i.e., $q'\in A$. To prove $q'\in\cP$ we have only
to show $E(q')$.

If $r(q)$ is even then $\{1,4\}$ gives
$\Delta q_2=+1$, $\Delta q_4= -1$, $\Delta r=-2$, so $E(q')$.

If $r(q)$ is odd then $\{2,4\}$ gives
$\Delta q_2=-1$, $\Delta q_4= -1$, $\Delta r=-1$, so $E(q')$.

\bigskip

\textsc{Case $q\in A$.}

\noindent In this case $E(q)$ is false. For the positions in this case,
we are going to indicate a move $q\to q'$ such that $q'\in A$ and
$E(q')$ is true,  so $q'\in\cP$.  Depending on the parities of $q_2$,
$q_4$, $r(q)$, suitable moves are listed in
Table~\ref{N-Pmoves}. It is easy to check using Table~\ref{q-moves}
that the listed moves  make  $q_2$, $q_4$ and $r(q)$ even, ensuring $E(q')$,
and furthermore they also satisfy $\Delta T\geq 0$, preserving $A$.
We now need to check that these moves are legal.

\medskip

\begin{table}
  \caption{Winning moves in the case $q\in A$}\label{N-Pmoves}
  \def\arraystretch{1.5}
\[
\begin{array}{|c|c|c|c|c|}
\hline
\text{Subcase}& q_2\bmod2  &q_4\bmod2 &r(q)\bmod2  &\text{moves to } \cP\\
\hline
A_1&0 & 0& 1& \{1,2\}\\ 
A_2&0 & 1& 0& \{4,5\}\\
A_3&0 & 1& 1& \{4,6\}\\
A_4&1 & 0& 0& \{2,6\}\\
A_5&1 & 0& 1& \{2,5\}\\
A_6&1 & 1& 0& \{2,3\}\\
A_7&1 & 1& 1& \{2,4\}\\
\hline
\end{array}
\]
\end{table}

In the  subcase $A_2$ the move $\{4,5\}$ is always legal, since $q_4>0$.
Similarly, for the move $\{2,3\}$ in the  subcase $A_6$
and  for the move $\{2,4\}$ in the  subcase $A_7$.
Other subcases require a more detailed  analysis.

\medskip

\textsc{Subcase $A_1$.} If $q_1>0$ then $\{1,2\}$ is legal,
otherwise $q_1=0$. This implies that $q_5>0$ since
$r(q)=q_1-q_5$ is odd in this subcase. Therefore move $\{5,6\}$ is
legal and changes the parities to $q'_2 =q_2 \equiv q'_4=q_4\equiv
(q'_5-q'_1) = q_5-1-q_1\equiv 0\pmod2 $. Thus $E(q')$ is true,
but we still need to prove that  $q'\in A$. In general, this move does not
preserve the region $A$: $\Delta s=0$ but $\Delta u=-2$.  Since $r(q)=q_1-q_5\leq -1$ and $T(q)=s(q)\geq 1$,
we have $u(q) = s(q)-2r(q)+1\geq 4$, and so $T(q')>0$ and $q'\in A$.

\medskip

\textsc{Subcase $A_3$.} Since $s(q) \geq 0$ and $q_4$ is odd,
$q_6>0$ in this case. Thus the move $\{4,6\}$ is legal.

\medskip

\textsc{Subcase $A_4$.} Again, since $s(q) \geq0$ and $q_2$ is odd, $q_6>0$
in this case, and the move $\{2,6\}$ is legal.

\medskip

\textsc{Subcase $A_5$.}  If $\{2,5\}$ is legal then we are done.
Otherwise, $q_5=0$, since $q_2$ is odd in this case.
This implies that  $q_1>0$ since $r(q)=q_1-q_5$ is odd.
Again, $q_6>0$ because $q_2$ is odd and $s(q)\geq0$. Thus, the move
$\{1,6\}$ is legal and it changes the parities to $q'_2 =q_2+1 \equiv q'_4=q_4\equiv
r(q') = r(q)-1\equiv 0\pmod2 $. Thus $E(q')$ is true. For $\{1,6\}$,
$\Delta s=-2$ and $\Delta u=0$.
From $u(q)=s(q)-2r(q)+1\geq 2$ and $r(q)>0$ we conclude that $s(q)\geq3$
and so $s(q')\geq 1$. Therefore $T(q')>0$ and $q'\in A$.
\qed

\section{Open questions and conjectures}

P-positions of {\sc Nim}$^1_{6,=2}$ are characterized
by the explicit formulas  of Theorems~\ref{t6} and~\ref{t5}.
Although they look complicated, they immediately provide
a polynomial time algorithm to solve the following problem:
given a nonnegative integer vector  $(x_1,x_2,x_3,x_4,x_5,x_6)$,
whose coordinates $x_i$  are represented in binary, check whether
 $(x_1,x_2,x_3,x_4,x_5,x_6)$ is a P-position in {\sc Nim}$^1_{6, =2}$.

Note that the set of P-positions of {\sc Nim}$^1_{6, =2}$ is semilinear.
Recall that a~\emph{semilinear set} is
a set of vectors from $\NN^d$ that can be expressed in Presburger
arithmetic; see, e.g.,~\cite{Haase}.  Presburger arithmetic admits
quantifier elimination.  So, a semilinear set can be expressed as a
finite union of solutions of systems of linear inequalities and
equations modulo some integer (they are fixed for the set).

This observation leads to a natural  conjecture.

\begin{conjecture}\label{weakest}
For any $n$, $k$  P-positions of {\sc Nim}$^1_{n,=k}$ form a
semilinear set.
\end{conjecture}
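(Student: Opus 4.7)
\textbf{Proof plan for Conjecture~\ref{weakest}.} Since the conjecture is stated as open, what follows is an attack strategy rather than a complete proof. The unifying idea is to upgrade the invariant-shift machinery of Section~\ref{s1} into a statement that reduces the P-set to finitely many cosets of a full-rank sublattice, which is precisely a semilinear description.

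The first step is to exhibit, for each pair $(n,k)$, a full-rank lattice $L_{n,k}\subseteq\ZZ^n$ of P-shifts for {\sc Nim}$^1_{n,=k}$. The generators $(1^{2k})$, $(0,1^{2k})$ and $(0,2^k)$ of Theorem~\ref{p1} already yield such shifts in specific parameter ranges, and I would iterate the height-induction argument from the proof of Theorem~\ref{p1} to produce additional shifts of the form $(0^a, c_1^{b_1}, c_2^{b_2})$ until the resulting lattice has full rank $n$. Second, within each residue subgame {\sc Nim}$^1_{n,=k}[j]$ I would prove a stabilization statement: there is a threshold $M=M(n,k)$ such that, for every nondecreasing $x$ with $x_1\ge M$, membership of $x$ in the P-set depends only on the $L_{n,k}$-coset of $x$. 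When all coordinates exceed $M$, the emptiness constraint never blocks a move, so the move set acts by pure lattice translation and the Sprague--Grundy recursion becomes periodic modulo $L_{n,k}$. The boundary region where some $x_i<M$ is handled by the subgame embedding of {\sc Nim}$^1_{n',=k}$ into {\sc Nim}$^1_{n,=k}$ noted in the introduction, together with closure of Presburger sets under finite unions and projections.

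The main obstacle is the interaction between moves and the reordering step that restores nondecreasing order. This is precisely what forces the auxiliary regions $A,B,C$ and the conditions $E,F,K$ in Theorems~\ref{t6} and~\ref{t6odd}, and it is not clear a priori that the pieces arising from this interaction remain Presburger-definable for arbitrary $n$ and $k$. A promising route around this obstacle is to recast the game as a counter system or vector addition system with states on $\NN^n$; Presburger definability of the P-set then follows from known semilinearity results for reachability in well-structured transition systems, provided one can verify that exact slow $k$-{\sc Nim} fits such a framework uniformly in $n$ and $k$. A weaker fallback worth pursuing is to prove the conjecture for fixed small $k$ and all $n$ by induction on $n$, using residue splitting and the subgame embedding to reduce to instances already covered by explicit formulas such as Theorems~\ref{t6} and~\ref{t6odd}.
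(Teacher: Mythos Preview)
The paper does not prove Conjecture~\ref{weakest}; it is stated as an open problem, so there is no proof to compare against. Your proposal is therefore a research plan, and it has two genuine gaps worth naming.

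First, the core of your strategy is Step~1: exhibiting a full-rank lattice $L_{n,k}$ of P-shifts. But the paper gives no evidence that such a lattice exists for general $(n,k)$. Theorem~\ref{p1} covers only the three special families $n=2k$, $n=2k+1$, $n=k+1$, and the paper explicitly shows that plausible candidates fail: $(0,0,0,0,4)$ is not a P-shift in {\sc Nim}$^1_{5,=2}$, and $(0,0,0,2,2,2)$ is not a P-shift in {\sc Nim}$^1_{6,=3}[2]$. Producing new shifts by ``iterating the height-induction argument'' is not a method; that argument in Theorem~\ref{p1} depends on the very specific combinatorics that each move has a complementary or repeated move landing back at $x$, and there is no indication this extends. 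If Step~1 were known, the conjecture would follow immediately (a set periodic under a full-rank lattice is trivially semilinear), so you have essentially relocated the difficulty rather than reduced it.

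Second, the fallback via well-structured transition systems conflates reachability with winning regions. Semilinearity results for VAS and WSTS concern reachability or coverability sets, not the alternating fixed point that defines P-positions. The paper's citations to \cite{GV20} and \cite{Raskin20} are directly relevant here: for the broader FDG class, P-sets can be non-semilinear and computationally hard, and Raskin's observation that semilinear boundary conditions need not propagate to a semilinear P-set undercuts exactly the dimension-reduction induction you propose as a weaker fallback. Any successful attack must isolate what is special about nonnegative $(0,1)$ difference vectors, and your plan does not yet do that.
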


Conjecture~\ref{weakest} implies that there exists a polynomial time
algorithm deciding  P-positions of {\sc Nim}$^1_{n,=k}$ for all $n$,
$k$.

It is well-known that evaluating formulas in Presburger arithmetic is a
very hard problem. The first doubly exponential time bounds for this
problem were established by Fischer and Rabin~\cite{FR74}.
Berman proved that the problem requires doubly exponential space~\cite{B80}.

But these hardness results are irrelevant for our needs. In solving
{\sc Nim}$^1_{n,=k}$
we deal with sets of dimension $O(1)$ and may hardwire the
description of the set of P-positions in an algorithm solving the
game. Verifying a linear inequality takes polynomial time as well as
verifying an equation modulo an integer. Thus, any semilinear set of
dimension $O(1)$ can be recognized by a polynomial time algorithm.

Note that for a wider class of games, so-called multidimensional
subtraction games with a~fixed difference set, P-positions can be not
semilinear. Moreover, it was recently proven that
there is no  polynomial time algorithm  solving
a specific game of this sort~\cite{GV20}.

In the game from~\cite{GV20} it is allowed to add tokens to some piles
by a move, provided that the total number of tokens is strictly
decreasing.
We consider this feature crucial for  hardness results.

To put it more formally,  we need to make formal  definitions.
A game from the class FDG is specified by a finite set
$\D\subset\ZZ^{d}$ which is called  the \emph{difference set}. We require that
\begin{equation*}
\sum_{i=1}^d a_i >0
\end{equation*}
for each $(a_1,\dots,a_d)\in\D$.

Positions of the game are vectors
$x=(x_1,\dots,x_d)$ with nonnegative coordinates.
A~move from $x$ to $y$ is possible if  $x-y\in \D$.

It is proven in~\cite{GV20} that for some constant $d$ there exists a
set $\D$ such that there is no algorithm solving the game with the difference
set $\D$ and running in time $O(2^{n/16})$, where $n$ is the input size
(the length of the binary representation of the position $(x_1,\dots, x_d)$).

Also, it was  proved by Larsson and W\"astlund~\cite{LW13} that the equivalence problem for FDG is undecidable.
The existence of difference vectors with negative
coordinates is essential for both results.

 {\sc Nim}$^1_{n, =k}$ belongs to the class FDG: difference
vectors are (0,1)-vectors with exactly $k$ coordinates equal to~1.
The solution of {\sc Nim}$^1_{6, =2}$ encourages us to suggest a
stronger conjecture.


\begin{conjecture}\label{strong}
For any FDG game such that $a_i\ge 0$ for each
$(a_1,\dots,a_d)\in\D$, the set of P-positions is semilinear.
\end{conjecture}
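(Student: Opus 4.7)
The plan is to attack Conjecture~\ref{strong} via an eventual-periodicity theorem analogous to the classical one-dimensional case, combined with induction on the dimension~$d$. First, since every play shifts a position by an element of the lattice $L=\ZZ\D$, the game splits into finitely many disjoint subgames indexed by the cosets of $L$ meeting $\NN^d$, so it suffices to prove semilinearity within a single coset. A useful feature of the nonneg subtraction-only setting is that a sequence of moves $a^{(1)},\dots,a^{(t)}$ is executable from $x$ if and only if $\sum_{j}a^{(j)}\le x$ coordinatewise, because each coordinate of the partial sums is monotonically nondecreasing; hence play-feasibility collapses into a single linear constraint on the multiset of moves used, which should make large parts of the strategic reasoning expressible in Presburger arithmetic.

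Second, I would try to prove the following eventual-periodicity statement: there exist a finite-index sublattice $L'\subseteq L$ and a threshold $B$ such that whenever $\min_i x_i\ge B$, the P-status of $x$ depends only on $x\bmod L'$. Granted this, the ``bulk'' region $\{x:\min_i x_i\ge B\}$ decomposes into finitely many cosets of $L'$, each uniformly P or uniformly N, giving a semilinear description of the bulk. The ``boundary'' region, where some coordinate is below $B$, reduces, after fixing those small coordinates to each of their finitely many possible values, to an instance of FDG in dimension $d-1$; induction on $d$ then handles it, and Boolean closure of semilinear sets assembles the full description.

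The principal obstacle is the eventual-periodicity step. In dimension one it is classical, via pigeonhole on a finite sliding window of recent P-statuses; in higher dimensions the analogous window is infinite, so pigeonhole fails directly. Two routes seem worth trying. The first is to fix one coordinate and analyze the restriction as a parametric game on $\NN^{d-1}$, arguing that the dependence on the fixed coordinate is itself eventually periodic by a uniform version of the induction hypothesis. The second is to invoke Dickson's lemma and the theory of well-structured transition systems, but this route is hampered because neither $P$ nor its complement is monotone under the coordinatewise order; a successful attempt here would first have to extract a monotone auxiliary predicate from the recursion defining $P$. The fact that the small cases settled in this paper and in~\cite{GHHC20} yield relatively coarse period lattices offers mild circumstantial evidence that such a uniform period exists in general.
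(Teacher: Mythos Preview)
The statement is labeled a \emph{Conjecture} and sits in the paper's section on open problems; the paper offers no proof, only heuristic motivation together with an explicit caution (citing~\cite{Raskin20}) that the natural dimension-reduction idea does not go through straightforwardly. There is thus no proof in the paper to compare against, and what you have submitted is a plan whose central step---the eventual-periodicity lemma---you yourself flag as unproved, with neither proposed route actually closing: the parametric one would need a uniform-in-parameter strengthening of the inductive hypothesis in dimension $d-1$, not merely the conjecture itself; and the Dickson/WSTS one needs a monotonicity the P-predicate does not have, as you concede.

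There is also a concrete error in your boundary step. Fixing a coordinate $x_i$ to a value $c$ with $0<c<B$ does \emph{not} yield an FDG in dimension $d-1$: moves with $a_i>0$ continue to decrease $x_i$, so the coordinate cannot be frozen and the slice $\{x:x_i=c\}$ is not closed under play. The only clean dimension drop occurs on the face $\{x:x_i=0\}$, where moves touching coordinate $i$ become unavailable and the residual game is genuinely an FDG on $\NN^{d-1}$. But even granting, by induction, that the P-set is semilinear on every such face, one must then propagate these semilinear boundary data through the P-recurrence into the strip $\{x:0<\min_i x_i<B\}$---and this is precisely the point at which the paper records, via~\cite{Raskin20}, that arbitrary semilinear boundary data need \emph{not} produce a semilinear solution. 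Your outline therefore runs directly into the obstruction the authors identify as the reason the conjecture remains open.
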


Conjecture~\ref{strong} also implies that the equivalence problem for
FDG with nonnegative difference vectors is decidable, since
Presburger arithmetic is decidable.

Why do we believe in Conjecture~\ref{strong} in spite of results
from~\cite{LW13} and~\cite{GV20}?
The first idea is to apply  inductive arguments  in the
case of nonnegative difference vectors.
Note that Conjecture~\ref{strong} holds for a 1-dimensional FDG.
If a coordinate becomes zero it remains zero during the rest of the
game (if all difference vectors are nonnegative). So to prove
Conjecture~\ref{strong} one needs to provide a reduction in dimension
of a game.

This idea does not work in a straightforward manner. It can be shown
that arbitrary semilinear boundary conditions do not imply semilinearity of the
solution for the recurrence determining the set of
P-positions~\cite{Raskin20}. For the boundary conditions determined by
FDG games the question is open and we still have a hope to develop
a dimension reducing technique.

In any case, characterization of FDG games with semilinear P-positions
would be an important problem for future research.

\section*{Acknowledgements}
The paper was prepared within the framework
of the HSE University Basic Research Program;
the  fifth author were funded partially by the RFBR grant 20-01-00645.


\begin{thebibliography}{99}
\bibitem{B80}
Leonard Berman,
The complexity of logical theories,
Theor. Comput. Sci. 11:1 (1980) 71--77.

\bibitem{FR74}
Michael J. Fischer and Michael O. Rabin,
Super-Exponential Complexity of Presburger Arithmetic,
SIAM-AMS Proceedings, Volume VII, 1974, pp. 27–41.

\bibitem{GH15}
Vladimir Gurvich, Silvia Heubach, Nhan Bao Ho, and Nikolay Chikin,
Slow k-Nim, Integers 20 (2020) 1--19.

\bibitem{GHHC20}
Vladimir Gurvich and Nhan Bao Ho,
Slow k-Nim, RUTCOR Research Report, RRR-03-2015,
Rutgers University;

http://arxiv.org/abs/1508.05777


\bibitem{GV20}
Vladimir Gurvich and Michael Vyalyi,
Computational Hardness of
Multidimensional Subtraction Games,
In: Fernau H. (eds) Computer Science -- Theory and Applications,
Computer Science in Russia (CSR 2020.
Lecture Notes in Computer Science, LNCS 12159, Springer. 

\bibitem{Haase}
Christoph Haase. 2018. A survival guide to Presburger arithmetic. ACM
SIGLOG News 5, 3 (July 2018), 67--82.

\bibitem{LW13}

U. Larsson and W.J. W\"astlund. From heaps of matches to the limits
of computability. The Electronic Journal of Combinatorics,
2013. Vol. 20, no 3, \#P41.

\bibitem{Raskin20} M. Raskin. Private communication. 2020.


\end{thebibliography}
\end{document}